\documentclass[12pt,leqno,a4paper]{article}
\usepackage{hyperref}
\usepackage{amsmath}
\usepackage{amssymb}
\usepackage{amsthm}
\usepackage{graphicx,epstopdf}

\hypersetup{
    colorlinks=true,
    linkcolor=black,
    filecolor=black,      
    urlcolor=blue,
    citecolor=red,
}

\makeatletter
\setlength{\textwidth}{16cm}
\addtolength{\evensidemargin}{-1cm}
\addtolength{\oddsidemargin}{-1cm}

\newcommand{\guio}[1]{\nobreakdash-\hspace{0pt}}

\newtheorem*{theorem*}{Theorem}

\newtheorem*{lemmam*}{Lemma 5. (\cite{MOV})}
\newtheorem*{lemma*}{Lemma}
\newtheorem*{corollary*}{Corollary}

\newtheorem{remark*}{Remark}

\theoremstyle{definition}

\newtheorem*{acknowledgements}{Acknowledgements}

\newcommand{\R}{\mathbb{R}}
\newcommand{\C}{\mathbb{C}}

\newcommand{\re}{\operatorname{Re}}
\newcommand{\pv}{\operatorname{p.v.}}

\newcommand{\ep}{\varepsilon}
\newcommand{\e}{\varepsilon}

\newcommand{\ba}{\overline{z}}

%
%

\title{Explicit minimisers of some nonlocal anisotropic energies: a short proof}

\author{J.\ Mateu, M.~G.\ Mora, L.\ Rondi, L.\ Scardia and J.\ Verdera}

\date{}

\begin{document}
\maketitle
\begin{abstract}
In this paper we consider nonlocal energies defined on probability measures in the plane, given by a convolution interaction term 
plus a quadratic confinement. The interaction kernel is $-\log|z|+\alpha\, x^2/|z|^2, \; z=x+iy,$ with $-1 < \alpha< 1.$ 
This kernel is anisotropic except for the Coulombic case $\alpha=0.$  We present a short compact proof of the known surprising fact 
that the unique minimiser of the energy is the  normalised characteristic function of the domain enclosed by an ellipse with horizontal
semi-axis $\sqrt{1-\alpha}$
and vertical semi-axis $\sqrt{1+\alpha}.$ Letting $\alpha \to 1^-$ we find that the semicircle law on the vertical axis is the unique 
minimiser of the corresponding energy, a result related to interacting dislocations, and previously obtained by some of the authors. 
We devote the first sections of this paper to presenting some well-known background material in the simplest way possible, so that
readers unfamiliar with the subject find the 
proofs accessible.
\bigskip

\noindent\textbf{AMS 2010 Mathematics Subject Classification:}  31A15 (primary); 49K20 (secondary).

\medskip

\noindent \textbf{Keywords:} nonlocal interaction, potential theory, maximum principle, Plemelj formula.
\end{abstract}

\section{Introduction}

Consider the energy functional defined on a  probability measure $\mu$ in the plane by
\begin{equation}\label{en}
I_\alpha(\mu)=\iint W_\alpha(z-w) \,d\mu(z) \,d\mu(w) + \int |z|^2 \,d\mu(z),
\end{equation}
where the interaction kernel is
\begin{equation}\label{intker}
W_\alpha(z) = -\log|z|+\alpha \frac{x^2}{|z|^2}, \quad z=x+i y \in \C, \quad z\neq0, \quad  \alpha \in \R.
\end{equation}
 The result we discuss here was proved in \cite{CMMRSV} and reads as follows.
 \begin{theorem*}\label{Theorem}
 If $-1< \alpha < 1,$ then the unique minimiser of the energy functional $I_\alpha$ is the normalised characteristic function of the domain enclosed by the ellipse (centred at zero) with horizontal
 semi-axis $\sqrt{1-\alpha}$ and vertical semi-axis $\sqrt{1+\alpha}.$
 \end{theorem*}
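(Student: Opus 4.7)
My plan is the classical variational approach: verify the Euler--Lagrange conditions for the proposed candidate, then use strict convexity for uniqueness. From the preliminary material one may assume the characterisation that a probability measure $\mu$ of finite energy is the unique minimiser of $I_\alpha$ provided
\[
\Psi_\alpha(z):=2\int W_\alpha(z-w)\,d\mu(w)+|z|^2
\]
is constant $\equiv C$ on $\supp\mu$, satisfies $\Psi_\alpha\ge C$ everywhere on $\C$, and $I_\alpha$ is strictly convex on the competing class. The candidate I would test is $\mu_\ast=(\pi ab)^{-1}\chi_E$, where $a=\sqrt{1-\alpha}$, $b=\sqrt{1+\alpha}$, and $E$ is the open ellipse with these semi-axes.

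Inside $E$ both convolutions $-\log|\cdot|\ast\mu_\ast$ and $(x^2/|\cdot|^2)\ast\mu_\ast$ turn out to be quadratic polynomials in $x,y$. The logarithmic one is classical: it is determined uniquely by the Poisson equation $\Delta U=-2(ab)^{-1}\chi_E$ inside, together with the correct $\log|z|$ behaviour at infinity. For the anisotropic one I would write
\[
\frac{x^2}{|z|^2}=\fr+\fr\,\re\!\left(\frac{z}{\bz}\right)
\]
and convert the area integral over $E$ into a contour integral along $\partial E$ via the Cauchy--Pompeiu formula. The boundary values of the resulting Cauchy-type integrals are then handled by the Plemelj--Sokhotski formula (hence its appearance in the keywords), producing an explicit second quadratic in $x,y$. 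The specific choice $a^2=1-\alpha$, $b^2=1+\alpha$ should then force the quadratic part of $\Psi_\alpha$ to cancel exactly on $E$, leaving $\Psi_\alpha\equiv C$ there, which is the algebraic heart of the argument.

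The remaining step, and the main obstacle, is to show $\Psi_\alpha\ge C$ on $\C\setminus\overline{E}$. The same contour representation furnishes an explicit formula for $\Psi_\alpha-C$ outside $E$ that vanishes on $\partial E$ by the inside computation and grows like $|z|^2$ at infinity; its positivity I would try to establish either by a direct computation in confocal elliptic coordinates, exploiting monotonicity in the outer ellipse parameter, or by a maximum-principle argument on $\C\setminus\overline{E}$ applied to $\Psi_\alpha-C$ or to a suitable subharmonic minorant of it. Uniqueness then follows from the strict convexity of $I_\alpha$, which under the assumption $|\alpha|<1$ reflects the strict positivity of the Fourier symbol of $W_\alpha$ away from the origin; taking $\alpha\to 1^-$ at the end recovers the semicircle-law limit advertised in the abstract.
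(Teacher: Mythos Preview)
Your overall architecture matches the paper: establish the Euler--Lagrange conditions for the candidate $\mu_\ast$ and invoke strict convexity (via positivity of $\widehat{W_\alpha}$) for uniqueness. The interior (\textbf{EL1}) part is also close in spirit: the paper likewise writes the anisotropic kernel in the complex form and reduces the computation to the Cauchy potential $\frac{1}{\pi z}\star\chi_E$ and $\frac{1}{\pi}\frac{z}{\bar z^2}\star\chi_E$, obtaining explicit quadratic expressions inside $E$ which force $a=\sqrt{1-\alpha}$, $b=\sqrt{1+\alpha}$. One small misattribution: the Plemelj formula is \emph{not} used for this interior computation. The paper gets the interior formulas by elementary holomorphic-extension arguments (Liouville), not by boundary jump formulae.

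The real content is \textbf{EL2}, and here your plan has a genuine gap. Your two alternatives are (i) direct computation in confocal elliptic coordinates or (ii) ``a maximum-principle argument on $\Psi_\alpha-C$ or a suitable subharmonic minorant.'' Option (i) is precisely the older explicit-computation approach which the present paper is written to \emph{avoid}; it works, but it is not the proof here. Option (ii), as stated, does not go through: outside $E$ the potential $P$ is \emph{biharmonic}, not harmonic and not superharmonic, so the minimum principle cannot be applied to $P-C_0$ directly, and no natural subharmonic minorant is apparent. The paper's key idea --- which your plan does not anticipate --- is a two-step device. First, one uses the Plemelj jump formula (this is where Plemelj actually enters) to show that the \emph{exterior} boundary limit of $\Delta P$ on $\partial E$ is bounded below by $\frac{2}{ab}(1-|\alpha|)>0$. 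Second, for a fixed target point $a\notin E$ with nearest boundary point $a_0$ and outer normal $\vec n$, one builds the auxiliary function
\[
h(z)=\langle\nabla P(z),\vec n\rangle-\tfrac12\,\Delta P(z)\,\langle z-a,\vec n\rangle,
\]
which is \emph{harmonic} on $\C\setminus E$ because $P$ is biharmonic there. The first step, together with $\nabla P=0$ on $\partial E$ and convexity of $E$, gives $h\ge 0$ on $\partial E$; the confinement gives $h>0$ near infinity; hence $h\ge 0$ everywhere outside, and in particular $0\le h(a)=\langle\nabla P(a),\vec n\rangle$. Thus $P$ is nondecreasing along the outward normal ray, yielding $P(a)\ge C_0$. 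This biharmonic minimum-principle trick is the heart of the short proof, and it is missing from your proposal.
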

 
For $\alpha=0$ this result is already proved in Frostman's thesis \cite{F}.  A simple argument shows that as $\alpha \to 1^-$  
the minimiser  in the Theorem tends in the weak $\star$ topology of finite Radon measures to the semi-circle law on the vertical axis, namely, to the 
probability measure
\begin{equation*}\label{sclaw}
\frac{1}{\pi} \sqrt{2-y^2}\;\chi_{[-\sqrt{2}i,\sqrt{2}i]}(y)\,dy.
\end{equation*}
One can show that indeed the semi-circle law on the vertical axis is the unique minimiser of the energy \eqref{en} with $\alpha=1.$ This was proved
in \cite{MRS}, and solved a long standing conjecture on the behaviour of interacting dislocations in metals, 
which predicted the formation of walls. Swapping variables one gets a similar result for $\alpha=-1$ involving the semicircle law on the horizontal axis. An argument based on energy comparison then leads to the conclusion that for $|\alpha| >1$
the unique minimiser of~\eqref{en} is one of the semicircle laws, which completes the picture.

Energies of the form \eqref{en}, with various types of interaction kernels, arise frequently in models in which individuals
repel each other if they get too close and are attracted if they get far from the centre of mass. The work done so far has concentrated mostly on radial interaction kernels and only very recently non-isotropy has entered the scene. See the introductions of \cite{MRS}, \cite{CMMRSV}, \cite{CMMRSV2} and \cite{MMRSV} for more information about that and for references to previous work on the subject. 

The main goal of this paper is to present a short compact proof of the Theorem. 
Along the way we also present some well-known background results, for the sake of the reader unfamiliar with the 
subject. The original proof in \cite{CMMRSV} relies on computing explicitly, at every point in the plane, the potential $P$ 
of the normalised characteristic function of the compact set $E$  enclosed by a generic ellipse (see \eqref{potmu} below for the 
definition of
the potential of a finite Radon measure $\mu$). With the full potential at hand, it is then shown that there exists a unique ellipse
such that the corresponding potential satisfies the so-called Euler-Lagrange conditions (the first condition on $E$, 
the second outside $E$), which are necessary conditions for minimality. This ellipse is shown to provide a minimiser of 
$I_\alpha,$  which is unique by the strict convexity of the energy.
This computational approach is very powerful, but it does not provide much insight about the 
deep nature of the problem.  In the argument we present here one needs instead to know explicitly the potential $P$ only inside ellipses,
which is much easier. 
The explicit formula for the potential inside the ellipse yields appropriate values for the semi-axes so that the potential is
constant on $E.$
In other words, it leads to finding a solution to the first Euler--Lagrange condition. It remains to show that $P$ is everywhere 
larger than or equal to the constant value it takes on $E$, which is the second Euler--Lagrange condition. We achieve that in two steps. The first one consists in showing that the Laplacian of $P$ on the exterior of $E$ has positive boundary limits. This we do via
the classical Plemelj  jump formula for the Cauchy Integral. The second step is an application of the minimum principle to a suitably constructed function, which exploits the fact that $P$ is biharmonic outside $E$. 

The short compact proof we present here, however, does not generalise to dimensions higher than two, unlike the original computational 
approach, which was exploited in \cite{CMMRSV2} to prove the natural higher-dimensional version of the result in \cite{CMMRSV}. Moreover, other energies in the plane involving other natural interaction kernels do not seem to be covered by what we do in this paper. 

In conclusion, the problem of understanding deeply why ellipses appear in minimising the energy \eqref{en} for interaction kernels with a similar structure to \eqref{intker} turns out to be challenging, and remains at present rather obscure. Further work seems to be needed to unveil its real nature.

The paper is organised as follows. Sections 2, 3 and 4 are expository and aimed at readers unfamiliar with the subject. 
We discuss some properties of the potentials we will be dealing with, the Euler--Lagrange conditions for the energy minimisers, and existence
and uniqueness of minimisers. Sections 5 and 6 contain the proof 
of the Theorem. In section 5 we find a candidate ellipse such that the potential of the normalised characteristic function of the interior domain satisfies the first Euler--Lagrange condition. In section 6 we prove that such potential satisfies the second Euler--Lagrange condition. Section 7 is an appendix devoted to the Plemelj jump formula.

\section{The potential}
Given a mass  distribution $\mu$, one can define a potential of $\mu$ associated with the energy \eqref{en}. This potential arises in computing the 
directional derivative of $I_\alpha$ at $\mu$ along a measure $\delta$ in the space of finite Radon measures (not necessarily positive, not necessarily with total mass $1$) with finite energy 
\begin{equation}\label{finen-0}
I_\alpha(|\delta|) <\infty.
\end{equation}
First of all, since the interaction kernel $W_\alpha$ in \eqref{intker} is even,
\begin{equation}\label{derpotmu}
\frac{d}{dt} I_\alpha(\mu+t\delta)\big|_{t=0} = 2 \int \left( \int W_\alpha(z-w) \,d\mu(w) + \frac{|z|^2}{2}\right)  \, d\delta(z),
\end{equation}
and so the following expression, which one calls the potential of $\mu,$ arises naturally:
\begin{equation}\label{potmu}
P(\mu)(z) = \left( W_\alpha \star \mu \right) (z)+ \frac{1}{2}|z|^2, \quad z
\in \mathbb{\C}.
\end{equation}
We claim that if $\mu$ is a probability measure minimising $I_\alpha$ then 
\begin{equation}\label{potmucons}
P(\mu)(z) = C_0, \quad , \quad \mu-a.e. \;\, \text{on}\;\,  \operatorname{spt}\mu,
\end{equation}
where $\operatorname{spt}\mu$ stands for the support of $\mu$ and $C_0$ is a constant.
Set $C_0= \int P(\mu) \, d\mu$ 
 and  define 
$$F_1= \{z\in \operatorname{spt}\mu: P(\mu)(z) \le C_0 \} \qquad \text{and} \qquad  F_2= \{z\in \operatorname{spt}\mu: P(\mu)(z) > C_0 \}.$$
Assume that $\mu(F_2)>0.$ If one has $\mu(F_1)=0$ then $C_0 = \int P(\mu) \,d\mu > C_0.$ Thus $\mu(F_1)>0.$
The measure
$$
\delta = \frac{1}{\mu(F_1)} \chi_{F_1} \mu - \frac{1}{\mu(F_2)} \chi_{F_2} \mu
$$
has total integral $0$ and finite energy \eqref{finen-0}.  Furthermore $\mu+t \delta$ is a positive measure with total mass $1$,  provided 
$$
-\mu(F_1) <  t < \mu(F_2).
$$
Note that the derivative in \eqref{derpotmu} vanishes, because $t=0$ is a minimum of $I_\alpha(\mu+t\delta)$. The right-hand side of \eqref{derpotmu}, however, is 
$$2\left(\frac{1}{\mu(F_1)} \int_{F_1} P(\mu) \,d\mu -\frac{1}{\mu(F_2)} \int_{F_2} P(\mu) \,d\mu \right) < 0,$$  
which is a contradiction. Therefore $\mu(F_2)=0$ and so $P(\mu)(z) \le C_0, \;\mu$-a.e.\ on $\operatorname{spt}\mu.$ 
Since $\int P(\mu)\,d\mu= C_0$ we obtain  \eqref{potmucons}. 

The argument above holds for kernels much more general than $W_\alpha$.

Moreover, for the kernel $W_\alpha$ in \eqref{intker}, one has that
$$P(\mu)(z) \le C_0, \quad z \in \operatorname{spt}\mu. $$
This can be proved readily as follows.  Since $\mu$ has finite $I_\alpha$ energy, its logarithmic energy is finite too.
Hence $\mu$ has no atoms
and $(x^2/|z|^2) \ast \mu$ is a continuous function on the plane. Therefore $P(\mu)$ is lower semicontinuous.
The set  $\{z\in \C: P(\mu)(z) > C_0\}$ is open and has zero $\mu$ measure 
and so does not intersect the support of $\mu.$

\smallskip

\noindent
As a final remark note that the definition \eqref{potmu} yields
$$
\int P(\mu)\, d\mu = I_\alpha(\mu) - \frac{1}{2} \int |z|^2\,d\mu(z).
$$
If one wants that, in analogy to what happens in electrostatics, $I_\alpha(\mu)= \int P(\mu)\, d\mu,$ 
then one has to add the constant $\frac{1}{2} \int |z|^2\,d\mu(z)$ to the right hand side of \eqref{potmu}. The resulting 
potential will also be $\mu$-\emph{a.e.}  constant on the support of $\mu$ if $\mu$ is a minimiser of $I_\alpha$.

\section{The Euler--Lagrange conditions}
The potential $P$ of a minimiser $\mu$ satisfies two properties, called the Euler--Lagrange conditions.
Let $Cap$ stand for the logarithmic capacity.
The Euler--Lagrange conditions {\bf{EL1}} and {\bf{EL2}} are the following.

\vspace{0.2cm}
{\bf{EL1}}: {\it{There exists a constant $C_0$ such that $P(\mu)(z)=C_0, \; \operatorname{Cap} $-a.e.\ $\text{on }\; 
\operatorname{spt}\mu$.}}

\vspace{0.2cm}

Before proceeding to the proof of the statement above we make a remark. Let $\nu$ be a probability measure with finite energy and set
 $\delta = \nu-\mu.$ Then $\mu +t\delta$
is a probability measure with finite energy for $0\le t \le 1.$ Since the function of $t \rightarrow I_\alpha(\mu+t \delta)$ has
a minimum at $t=0$ we conclude from \eqref{derpotmu} that
\begin{equation}\label{munu}
 \int P(\mu) \,d\nu \ge \int P(\mu)\, d\mu.
\end{equation}

We now prove \textbf{EL1}.
If the set $\{z \in \operatorname{spt}\mu : P(\mu) < C_0\}$ has positive capacity, then there exists a probability measure $\nu$ supported
on that set with finite logarithmic energy and thus with finite $I_\alpha$ energy. 
Now the right-hand side in \eqref{munu} is $C_0$ and the left-hand side is strictly less than $C_0,$ which is a contradiction.

\medskip


\vspace{0.2cm}

{\bf{EL2}}: {\it{$P(\mu)(z) \ge C_0, \quad z \notin \operatorname{spt} \mu, \; C_0$  the constant in {\bf{EL1}}}}.

\vspace{0.2cm}

The argument for proving {\bf EL2} is rather simple. 
Given a point $z\notin \operatorname{spt}\mu$, we set
$$
\nu=\frac{1}{|B(z,r)|} \chi_{B(z,r)}(w)\, dA(w),
$$
where $dA$ is Lebesgue measure in the plane. By \eqref{munu}
$$
C_0 = \int P(\mu)\,d\mu \le \frac{1}{|B(z,r)|} \int_{B(z,r)} P(\mu)(w) \,dA(w)  \xrightarrow{r \to 0} P(\mu)(z),
$$
since $P(\mu)$ is continuous on the complement of the support of $\mu.$ Hence {\bf EL2} holds.

\begin{remark*}[Sufficiency of the Euler-Lagrange conditions]
In the case of the functional $I_\alpha$ in \eqref{en} the Euler-Lagrange conditions are not only necessary conditions for minimisers, 
but they are also sufficient. In other words, they characterise minimisers of $I_\alpha$. This follows from the strict convexity of 
$I_\alpha$, which is proved in the next section. 
\end{remark*}

\section{Existence and uniqueness of minimisers}\label{EUm}
The existence of a minimiser for $I_\alpha$ follows by a standard compactness argument based on the lower semicontinuity and the
coercivity of the interaction kernel. In addition a minimiser has compact support since the quadratic confinement beats the 
interaction potential at infinity. 
Uniqueness follows from the fact that the Fourier transform of the interaction kernel $W_\alpha$ in \eqref{intker} is non-negative on 
test functions with zero integral. All this can be found in \cite{MRS}.  Here we discuss some of the steps in the computation of the Fourier 
transform of $W_\alpha$ and the proof of the uniqueness of minimisers from positivity of the Fourier transform of $W_\alpha$.

The definition of the Fourier transform  we use is
$$
\widehat{\phi}(\xi) = \int \phi(z) e^{-i \xi \cdot z}\, d A(z), \quad \xi \in \C, 
$$
where $\phi$ is a function in the Schwartz class $\mathcal{S}.$  The Fourier transform of the logarithmic term is
\begin{equation*}\label{}
 -\log|z| \xrightarrow{\rm{Fourier}}  \;\frac{2\pi}{|\xi|^2}+ c_0 \delta_0,
\end{equation*}
where $c_0$ is a constant, and the tempered distribution $1/|\xi|^2$ acts on $\varphi \in \mathcal{S}$ as
\begin{equation}\label{dis}
\int_{|\xi|<1} \frac{\varphi(\xi)-\varphi(0)}{|\xi|^2} \, d A(\xi) + \int_{|\xi|>1} \frac{\varphi(\xi)}{|\xi|^2} \, d A(\xi).
\end{equation}
To compute the Fourier transform of the anisotropic term in $W_\alpha$ we write
\begin{equation*}\label{ani}
\frac{x^2}{|z|^2} = \frac{1}{2} \left(\frac{x^2-y^2}{|z|^2}+1\right).
\end{equation*}
Since the homogeneous polynomial $x^2-y^2$ is harmonic we may resort to the well-known formula  \cite[Chapter 3, Section 3, p.73]{S}
\begin{equation*}\label{}
 \frac{x^2-y^2}{|z|^2} \xrightarrow{\rm{Fourier}} -4\pi \, \operatorname{p.v.} \frac{\xi_1^2-\xi_2^2}{|\xi|^4},
\end{equation*}
where $\operatorname{p.v.}$ stands for principal value.
Hence, for a constant $c_\alpha$ depending only on $\alpha,$
\begin{equation}\label{FW}
\begin{split}
W_\alpha \xrightarrow{\rm{Fourier}} & \;\frac{2\pi}{|\xi|^2}- \frac{\alpha}{2} \left(4\pi \, 
\operatorname{p.v.} \frac{\xi_1^2-\xi_2^2}{|\xi|^4}\right) + c_\alpha  \delta_0 \\*[10pt]
& \hspace{-0.4cm}= 2\pi \frac{(1-\alpha) \xi_1^2 + (1+\alpha) \xi_2^2}{|\xi|^4}+ c_\alpha \delta_0,
\end{split}
\end{equation}
and the tempered distribution of homogeneity $-2$ in the last line acts on  $\mathcal{S}$ in a way analogous to \eqref{dis}. 
From \eqref{FW} and by Plancherel's identity one gets
\begin{equation*}\label{}
(2\pi)^2 \iint W_\alpha (z-w) \phi(z) \phi(w) \,dA(z)dA(w) = \int \widehat{W_\alpha }(\xi) |\widehat{\phi}(\xi)|^2 \, dA(\xi) \geq 0,
\end{equation*}
provided $\phi$ is a function in $\mathcal{S}$ with vanishing integral. 

We now extend the above formula to a more general context. 

\begin{lemma*}\label{Lemma}
If $\mu_1$ and $\mu_2$ are compactly supported probability measures with finite energy, then
\begin{equation}\label{planch}
(2\pi)^2 \iint W_\alpha (z-w) \,d(\mu_1-\mu_2)(z) \, d(\mu_1-\mu_2)(w)= \int \widehat{W_\alpha }(\xi) |\widehat{\mu_1-\mu_2}(\xi)|^2 \, dA(\xi).
\end{equation}
\end{lemma*}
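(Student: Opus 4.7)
The plan is to prove \eqref{planch} by mollification, reducing to the Schwartz-class version of the identity just established in the text. Set $\nu:=\mu_1-\mu_2$, a compactly supported signed Radon measure of zero total mass and finite self-energy (the latter following from the finite energies of $\mu_1$ and $\mu_2$, via the boundedness of $x^2/|z|^2$ and standard bounds on mutual logarithmic energies of compactly supported measures). In particular $\nu$ has finite logarithmic energy, and hence no atoms. Fix a non-negative radial bump $\varphi\in C^\infty_c(\C)$ with $\int\varphi\,dA=1$, and set $\varphi_\epsilon(z)=\epsilon^{-2}\varphi(z/\epsilon)$ and $\nu_\epsilon:=\nu*\varphi_\epsilon\in C^\infty_c(\C)\subset\mathcal{S}$. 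Since $\int\nu_\epsilon=0$, applying the Schwartz-class identity with $\phi=\nu_\epsilon$ gives
\begin{equation*}
(2\pi)^2\iint W_\alpha(z-w)\,\nu_\epsilon(z)\,\nu_\epsilon(w)\,dA(z)\,dA(w)=\int\widehat{W_\alpha}(\xi)\,|\widehat{\nu_\epsilon}(\xi)|^2\,dA(\xi),
\end{equation*}
and I will pass to the limit $\epsilon\to 0$ on both sides.

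For the left-hand side, Fubini rewrites it as $\iint (W_\alpha*\psi_\epsilon)(z-w)\,d\nu(z)\,d\nu(w)$ where $\psi_\epsilon:=\varphi_\epsilon*\varphi_\epsilon$ is again a non-negative radial unit-mass mollifier. Superharmonicity of $-\log|\cdot|$ together with the radial symmetry of $\psi_\epsilon$ gives the pointwise bound $(-\log|\cdot|)*\psi_\epsilon\le -\log|\cdot|$, while $x^2/|z|^2$ is bounded by $1$. On the compact set where $z-w$ ranges these produce an $\epsilon$-uniform majorant of the form $|\log|z-w||+C$, which is $|\nu|\otimes|\nu|$-integrable by the finite logarithmic self-energy of $\nu$. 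Since $W_\alpha*\psi_\epsilon\to W_\alpha$ pointwise off the diagonal $\{z=w\}$, which has zero $|\nu|\otimes|\nu|$-measure because $\nu$ has no atoms, dominated convergence yields
\begin{equation*}
\iint (W_\alpha*\psi_\epsilon)(z-w)\,d\nu(z)\,d\nu(w)\longrightarrow\iint W_\alpha(z-w)\,d\nu(z)\,d\nu(w).
\end{equation*}

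For the right-hand side, $\widehat{\nu_\epsilon}(\xi)=\widehat{\nu}(\xi)\widehat{\varphi}(\epsilon\xi)$ with $|\widehat{\varphi}(\epsilon\xi)|\le\int\varphi=1$ and $\widehat{\varphi}(\epsilon\xi)\to 1$ pointwise. The $c_\alpha\delta_0$ piece of $\widehat{W_\alpha}$ pairs trivially on both sides since $\widehat{\nu_\epsilon}(0)=\widehat{\nu}(0)=0$, and the remaining density from \eqref{FW} is pointwise non-negative. Hence $|\widehat{\nu_\epsilon}|^2\le|\widehat{\nu}|^2$ with pointwise convergence, so dominated convergence applies provided the limit integral is finite; but this finiteness is forced by Fatou's lemma applied to the right-hand sides, since the left-hand sides are already known to converge to a finite limit. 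Combining, the right-hand sides tend to $\int\widehat{W_\alpha}(\xi)|\widehat{\nu}(\xi)|^2\,dA(\xi)$, and \eqref{planch} follows.

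The main obstacle will be the dominated convergence on the left-hand side: producing a single $|\nu|\otimes|\nu|$-integrable majorant uniform in $\epsilon$ relies both on the superharmonicity bound $(-\log|\cdot|)*\psi_\epsilon\le -\log|\cdot|$ and on the absence of atoms of $\nu$ to render the diagonal negligible. Once this is in hand, the right-hand side passage is essentially automatic from the non-negativity of $\widehat{W_\alpha}$ on zero-mass objects.
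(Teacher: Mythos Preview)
Your approach is essentially the paper's own: mollify $\nu$ to $\nu_\epsilon$, apply the Plancherel identity already recorded for Schwartz functions of zero mean, and pass to the limit on both sides by dominated convergence, the key inputs being the superharmonicity bound $(-\log|\cdot|)\ast\psi_\epsilon\le-\log|\cdot|$ and the finiteness of $\iint-\log|z-w|\,d|\nu|(z)\,d|\nu|(w)$. The paper makes explicit two points you treat as routine: it re-derives the smooth identity \eqref{parep} via a cutoff argument rather than quoting the Schwartz case, and it singles out the finiteness of the logarithmic energy of $|\nu|$ (your ``standard bounds on mutual logarithmic energies'') as the nontrivial inequality \eqref{dos}, cited from \cite{ST,L} or proved in Remark~\ref{self-contained}; note that what you need for the majorant is the energy of $|\nu|$, not of $\nu$, and this is exactly what \eqref{dos} delivers.
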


\begin{remark*}
The case $\alpha=0$ in \eqref{planch} corresponds to a purely logarithmic potential and is well-known. 
See, for example, \cite[Lemma 1.8, p.29]{ST} and \cite[Theorem 1.16, p.80]{L}, where it is shown that the left-hand side is
non-negative. Indeed, in both references one proves that
$$
\iint \log \frac{1}{|z-w|} \,d\nu(z) \, d\nu(w) = \int \left(\int \frac{1}{|z-w|}\,d\nu(w)\right)^2 \,dA(z),
$$
provided $\nu$ is a signed compactly supported  measure with finite logarithmic energy and $\nu(1)=0.$
See \cite[formula (1.30), p.34]{ST} and \cite[p.80]{L}. Applying Plancherel's identity to the right hand side above
one gets \eqref{planch} for $\alpha=0.$
\end{remark*}

Before proving the Lemma we show that \eqref{planch} implies uniqueness of the minimisers of $I_\alpha.$  We recall that at
the beginning of this section we observed that a minimiser has compact support. Hence we can assume that we are miminising over compactly
supported probability measures.
First of all, since the right-hand side in \eqref{planch} is non-negative, the Lemma yields
\begin{equation}\label{aaaa}
\begin{split}
2 \iint W_\alpha (z-w) \,d\mu_1(z) \,d\mu_2(w) &\le   \iint W_\alpha (z-w) \,d\mu_1(z) \,d\mu_1(w)\\
&\quad+ \iint W_\alpha (z-w) \,d\mu_2(z) \,d\mu_2(w),
\end{split}
\end{equation}
with strict inequality unless $\mu_1=\mu_2.$

Now the strict convexity of the energy functional $I_\alpha$ follows immediately. Note that, since the confinement term is linear one just needs to look at the energy associated with the interaction kernel, namely,
\begin{equation*}\label{}
J_\alpha(\mu)= \iint W_\alpha(z-w)\, d\mu(z)\,d\mu(w).
\end{equation*}
Now, if $0\le t\le1$ and $\mu_1$ and $\mu_2$ are probability measures with $J_\alpha(\mu_j) < \infty$, $j=1,2,$ then
\begin{equation*}\label{}
\begin{split}
J_\alpha((1-t)\mu_1+ t \mu_2)&= (1-t)^2 J_\alpha(\mu_1) +t^2 J_\alpha(\mu_2)+2t(1-t) \iint W_\alpha(z-w) \,d\mu_1(z)\,d\mu_2(w) \\*[10pt]
& \stackrel{\eqref{aaaa}}{\leq} (1-t)^2 J_\alpha(\mu_1) +t^2 J_\alpha(\mu_2)+t(1-t) \left(J_\alpha(\mu_1) +J_\alpha(\mu_2)\right)\\*[10pt]
& =(1-t) J_\alpha(\mu_1) +t J_\alpha(\mu_2)
\end{split}
\end{equation*}
and the inequality is strict unless $\mu_1=\mu_2.$ 

Of course strict convexity implies uniqueness of minimisers.

\begin{proof}[Proof of the Lemma] We follow closely the argument in \cite{CMMRSV2}.

Let $\varphi$ be a $C^{\infty}$ function, supported on the unit disc $B_1(0) \subset \R^2$, non-negative, radial, and with $\int_{\R^2} \varphi (z)\,dz =1.$  Set $\nu=\mu_1-\mu_2. $ For $\e>0$  we define
\begin{equation*}\label{convol}
\varphi_\e(z)= \frac{1}{\e^2} \varphi\left(\frac{z}{\e}\right) \qquad  \text{ and} \qquad \nu_\e= \nu \star \varphi_\e.
\end{equation*}
We claim that
\begin{equation}\label{parep}
(2\pi)^2 \int_{\R^2} (W_\alpha\star \nu_\e)(z) \nu_\e(z) \,dA(z) = \int_{\R^2} \widehat{W}_\alpha(\xi) |\widehat{\nu_\e}(\xi)|^2 \, dA(\xi).
\end{equation}
To show this, we set $f:=W_\alpha \star \nu_\e$ and $g:=\nu_\e$, and note that $g \in C^{\infty}_c(\R^2)$ and $f \in C^{\infty}(\R^2)$.
Moreover, since $\widehat{\nu_\e}\in{\mathcal S},$  $\widehat{\nu_\e}(0)=0$ and $\widehat{W}_\alpha$
behaves as $1/|\xi|^2$ at infinity in view of \eqref{FW}, we have that $\widehat{f}=\widehat{W}_\alpha\,\widehat{\nu_\e}\in L^1(\R^2)$.
Let $\psi \in C^{\infty}_c(\R^2)$ be such that $\psi = 1$ on~$B_1(0)$ and 
let $R>0$ be such that the support of $g$ is contained in $B_R(0)$. If $\tau>0$ is such that $\tau R < 1$, then,
by Parseval's formula,
\begin{equation}\label{lime}
\begin{split}
(2\pi)^2 \int_{\R^2} f(z)g(z)\,dA(z) & = (2\pi)^2  \int_{\R^2} \psi(\tau z)f(z)g(z) \,dA(z) \\*[7pt]
& =  \int_{\R^2} \widehat{(\psi(\tau\, \cdot)\, f)}(\xi) 
\,\overline{\widehat{g}(\xi)}\, d A(\xi) \\*[7pt]
& =  \int_{\R^2} (\widehat{\psi}_\tau \star \widehat{f}\,)(\xi)\, \overline{\widehat{g}(\xi)}\, d A(\xi), 
\end{split}
\end{equation}
where $\widehat{\psi}_\tau(z):= (2\pi \tau)^{-2} \widehat{\psi}(z/\tau)$. We have $\widehat\psi\in\mathcal S\subset L^1(\R^2)$ and 
${(2 \pi)}^{-2} \,\int_{\R^2} \,\widehat\psi(\xi)\,d\xi=\psi(0)=1.$  Hence the family $(\widehat{\psi}_\tau)_\tau$ is an approximate identity.  Being $\widehat f\in L^1(\R^2)$,  we conclude that $\widehat{\psi}_\tau \star \widehat{f}$ converges to $\widehat f$
in $L^1(\R^2)$, as $\tau\to0$. 

Since $\widehat{g}\in L^{\infty}(\R^2)$, we deduce that
$$
\lim_{\tau\to0} \int_{\R^2} (\widehat{\psi}_\tau \star \widehat{f}\,)(\xi)\, \overline{\widehat{g}(\xi)}\, d A(\xi)
= \int_{\R^2} \widehat{f}(\xi)\, \overline{\widehat{g}(\xi)}\, d A(\xi),
$$ 
which, together with \eqref{lime}, proves \eqref{parep}.

We now let $\e \rightarrow 0$ in \eqref{parep}. For the right-hand side we remark that
for every~$\xi\in\R^2$
$$
\widehat{\varphi_\e}(\xi) = \widehat{\varphi}(\e \xi) \to \widehat{\varphi}(0) =1,
$$
as $\e\to0$, and that
$\|\widehat{\varphi_\e}\|_{L^\infty}\le \|\varphi \|_{L^1} =1$
for each $\e >0$.
Therefore, by the Dominated Convergence Theorem, we have
\begin{equation*}\label{trickJV}
\int_{\R^2}  \widehat{W}_\alpha(\xi) |\widehat{\nu_\e}(\xi)|^2 \, dA(\xi) = \int_{\R^2}  \widehat{W}_\alpha(\xi) |\widehat{\nu}(\xi)|^2 
|\widehat{\varphi_\e}(\xi) |^2 \, d A(\xi) \ \to \ \int_{\R^2}  \widehat{W}_\alpha(\xi) |\widehat{\nu}(\xi)|^2  \, d A(\xi),
\end{equation*}
as $\e\to0$, even if the right-hand side is infinite.

To deal with the left-hand side of \eqref{parep}, we take $R>1$ such that the support of $\nu$ is contained in the
disc centred at the origin and with radius $R/4.$  Then
\begin{equation*}\label{}
|W_\alpha(z)|  \le \log\frac{R}{|z|}+|\alpha|+\log R, 	\quad |z|<R.
\end{equation*}
Set $\beta= |\alpha|+\log R,$  so that we obtain, for $\ep < \frac{R}{2}$, 
\begin{equation}\label{modW}
(|W_\alpha| \star \varphi_\e)(z) \le \left(\log\frac{R}{|z|} \star  \varphi_\e\right)(z)+\beta \le \log\frac{R}{|z|} +\beta, \quad |z|<\frac{R}{2},
\end{equation}
invoking the fact that the function $-\log|z|$ is superharmonic and $\varphi$ is radial (one writes the convolution
in polar coordinates and then applies superharmonicity on each circle).  

Note that 
\begin{equation}\label{kereg}
(W_\alpha\star \varphi_\e)(z) \xrightarrow{\e \to 0} W_\alpha(z), \quad z \in \R^2,
\end{equation}
because $W_\alpha$ is continuous as a function with values into $[0,+\infty]$. 

We claim  that $\nu$ has finite $I_\alpha$ energy \eqref{finen-0}, which can be translated into the condition 
\begin{equation}\label{finen}
\iint \log\frac{1}{|z-w|} \, d|\nu|(z) \, d|\nu|(w) < \infty.
\end{equation}

Recall that $\nu=\mu_1-\mu_2$  with $\mu_1$  and $\mu_2$ 
probability measures with finite energy and compact support. Thus $|\nu| \le \mu_1+\mu_2$ and \eqref{finen} is a consequence of
\begin{equation}\label{dos}
\begin{split}
2 \iint \log\frac{1}{|z-w|}\,d\mu_1(z) \,d\mu_2(w) &\le   \iint \log\frac{1}{|z-w|}  \,d\mu_1(z) \,d\mu_1(w)\\
&\quad+ \iint \log\frac{1}{|z-w|} \,d\mu_2(z) \,d\mu_2(w).
\end{split}
\end{equation}
The inequality above is clearly equivalent to
\begin{equation}\label{pos}
 \iint  \log\frac{1}{|z-w|} \,d(\mu_1-\mu_2)(z) \, d(\mu_1-\mu_2)(w) \geq 0,
\end{equation}
which is proven in \cite[Lemma 1.8, p.29]{ST} and \cite[Theorem 1.16, p.80]{L}. A more self-contained proof of \eqref{finen} is presented in Remark \ref{self-contained}. 

Combining \eqref{modW}, \eqref{kereg}, \eqref{finen} and the Dominated Convergence Theorem, we get
\begin{equation}\label{conv}
\iint_{\mathbb{R}^2\times \mathbb{R}^2}(W_\alpha\star \varphi_\e)(z-w) \, d\nu(z)\,d\nu(w) \ \to \ 
\iint_{\mathbb{R}^2\times \mathbb{R}^2} W_\alpha(z-w)  \, d\nu(z)\, d\nu(w),
\end{equation}
as $\e \to 0$, even if the right-hand side is infinite.

We now go back to the left-hand side of \eqref{parep} and observe that
\begin{equation*}
\int_{\R^2} (W_\alpha\star \nu_\e)(z)\nu_\e(z) \, dA(z) = \iint_{\mathbb{R}^2\times \mathbb{R}^2}(W_\alpha\star \varphi_\e\star \varphi_\e)(z-w) \, d\nu(z)\,d\nu(w).
\end{equation*}
Note that $(\varphi_\e \star \varphi_\e)(z) = \e^{-2}(\varphi \star \varphi)(z/\e)$ and that $\varphi \star \varphi$ inherits the properties of $\varphi$:
it is radial, belongs to $C^\infty_c(\R^2)$, and $\int_{\R^2} (\varphi \star \varphi)(z)\,dz =1$. 
Therefore, \eqref{conv} holds with $\varphi_\e$ replaced by $\varphi_\e \star \varphi_\e$. 
\end{proof}

\begin{remark*}[Self-contained proof of \eqref{finen}] \label{self-contained}
We provide now an alternative proof of \eqref{dos} (and therefore of \eqref{finen} and \eqref{pos}), based on Fatou's lemma
and on the superharmonicity of $-\log|z|$. 
To simplify the writing set $L(z)=\log(1/|z|).$ The mutual logarithmic energy of 
$\mu_1$ and $\mu_2$
can be written as
\begin{equation*}\label{}
 \int (L\star \mu_1) \, d\mu_2 =\left(L\star \mu_1 \star \widetilde{\mu_2}\right)(0),
\end{equation*}
where, given a positive Radon measure $\mu,$ $\widetilde{\mu}$ stands for the measure whose action on a test function $\varphi$ is 
$$
\int \varphi(-z)\, d\mu(z).
$$
Let $\varphi_\ep$ be an approximation of the identity as in the proof of the Lemma. Then $\varphi_\ep \star \varphi_\ep = (\varphi\star\varphi)_\ep$.
By Fatou's lemma
\begin{equation*}\label{}
\begin{split}
\left(L\star \mu_1 \star \tilde{\mu_2}\right)(0) & = \left(\left(\lim_{\ep \to 0} L\star \varphi_\ep \star \varphi_\ep\right) \star
\mu_1 \star \widetilde{\mu_2}\right)(0)\\*[8pt]
& \le \liminf_{\ep\to 0} \left( L\star \varphi_\ep \star \varphi_\ep  \star
\mu_1 \star \widetilde{\mu_2}\right)(0) \\*[8pt]
&= \liminf_{\ep\to 0} \left( L\star (\varphi_\ep \star
\mu_1) \star \widetilde{(\varphi_\ep \star \mu_2)} \right)(0) \\*[8pt]
& = \liminf_{\ep\to 0} \iint L(z-w)\, \mu_{1\ep}(w) \,dA(w) \mu_{2\ep}(z)\,dA(z),
\end{split}
\end{equation*}
where $\mu_{i\e} = \varphi_\e\star \mu_i$, for $i=1,2$. In view of \eqref{parep} for $\alpha=0$ we have
$$
\int_{\R^2} \left(L \star \nu_\e \right)(z) \, \nu_\e(z) \,dA(z) \ge 0,
$$
and so
\begin{equation*}\label{}
\begin{split}
2 \iint L(z-w)\, \mu_{1\ep}(w) \,dA(w) \mu_{2\ep}(z)\,dA(z) & \le \iint L(z-w) \mu_{1\ep}(z) \mu_{1\ep}(w) 
\,dA(z)\,dA(w)
\\*[8pt] & +  \iint L(z-w) \mu_{2\ep}(z) \mu_{2\ep}(w)\, dA(z)\,dA(w).
\end{split}
\end{equation*}
It remains to estimate the energies in the right-hand side of the previous inequality. Let $\mu$ a positive compactly supported
Radon measure. We then have
\begin{equation*}\label{}
\begin{split}
\iint L(z-w) \mu_{\ep}(z) \mu_{\ep}(w) \,dA(z)\,dA(w) & =\left(L\star \mu \star \widetilde{\mu}\star (\varphi \star \varphi)_\ep 
 \right)(0) \\*[8pt]
 & \le \left(L\star\mu\star \widetilde{\mu}\right)(0),
\end{split}
\end{equation*}
appealing to the superharmonicity of $L\star\mu\star \widetilde{\mu}$ and the fact that $\varphi\star\varphi$ is non-negative, 
 radial and with integral equal to $1.$ Therefore \eqref{dos} holds.
\end{remark*}

\section{The candidate ellipse}

In this section we show that the potential of the normalised characteristic function of the domain enclosed by the ellipse with horizontal semi-axis $\sqrt{1-\alpha}$ and vertical semi-axis $\sqrt{1+\alpha}$
satisfies the first Euler--Lagrange condition. We need to work with  a general ellipse with semi-axis $a$ and $b$ and the enclosed set 
\begin{equation*}\label{}
E=E(a,b)= \left\{(x,y) : \frac{x^2}{a^2}+\frac{y^2}{b^2} \le 1\right\}.
\end{equation*}
Let $P$ be the potential of the normalised characteristic function of $E$ defined as in \eqref{potmu}. The first Euler--Lagrange equation states that $P$ is
constant on $E,$ or equivalently, that its gradient is $0$ on $\mathring{E}.$  Expressing $x$ as $(z+\overline{z})/2$ and recalling that $\nabla= 2 \partial/\partial\, \ba$  one obtains
\begin{equation}\label{grapot}
\nabla P(z)=\left(-\frac{1}{\overline{z}} + \frac{\alpha}{2} \left(\frac{1}{z} -\frac{z}{\ba^2} \right)\right) \star 
 \frac{1}{|E|} \chi_E+z, \quad z \in \C.
\end{equation}
Note that the formula above, which holds in the sense of distributions, implies that $\nabla P$ is a continuous function, the first term being
the convolution of a locally integrable kernel with a bounded compactly supported function. Hence $P$ is of class $C^1$ in the whole plane.
To check \textbf{EL1} one has to compute explicitly on $E$ the potentials
\begin{equation*}\label{}
\frac{1}{\overline{z}} \star \chi_E \qquad \text{and} \qquad \frac{z}{\ba^2}  \star \chi_E.
\end{equation*}
Once we have these explicit formulas we will set the equation $\nabla P=0$ on $E$ and solve it for $a$ and $b.$

We start by computing the Cauchy potential of the characteristic function of $E,$ following \cite{HMV}.
Recall that $1/\pi z$ is the fundamental solution of the operator $\overline{\partial}= \partial/\partial\, \ba.$ Hence
\begin{equation*}\label{}
 \left(\frac{1}{\pi z}\star \chi_{E}\right)(z)= \ba+f(z), \quad z \in \mathring{E},
\end{equation*}
with $f$ holomorphic on $\mathring{E}.$ The function $f$ has to be chosen so that on the boundary of~$E$ the function $\ba+f(z)$
extends holomorphically to $\C \setminus E.$ Writing the equation of the ellipse in the variables $z$ and $\ba$ and solving for $\ba$
one obtains
\begin{equation*}\label{}
\ba=\lambda z + 2ab \,h(z), \quad z \in \partial E,\quad \lambda = \frac{a-b}{a+b},
\end{equation*} 
where $h$ is
\begin{equation*}\label{aich}
h(z)=\frac{1}{z+\sqrt{z^2+c^2}}, \qquad \text{with}\qquad c^2=b^2-a^2.
\end{equation*} 
The domain of the holomorphic function $h$ is the complement in the plane of the segment joining the foci of the ellipse. More explicitly, the domain of $h$ is $\C\setminus [-\sqrt{a^2-b^2}, \sqrt{a^2-b^2}]$ if $a\ge b$ and $\C\setminus [-i\sqrt{b^2-a^2}, i \sqrt{ b^2-a^2}]$
if $a \le b.$
Choosing $f(z) = -\lambda z$ one gets
\begin{equation}\label{cau}
 \left(\frac{1}{\pi z}\star \chi_{E}\right)(z) =
\begin{cases}
\ba-\lambda z, &z \in E, \\*[10pt]
2ab\,h(z), &z \in E^c.
\end{cases}
\end{equation}
The preceding identity follows from Liouville's theorem and the remark that both sides of \eqref{cau} are continuous functions on the plane, vanishing at $\infty,$
whose $\overline{\partial}$-derivative is the
characteristic function of $E.$
Taking conjugates
\begin{equation}\label{caucon}
 \left(\frac{1}{\pi \overline{z}}\star \chi_{E}\right)(z) =
\begin{cases}
z-\lambda \overline{z}, &z \in E, \\*[10pt]
2ab\,h(\overline{z}), &z \in E^c.
\end{cases}
\end{equation}
Now we reduce the computation of 
\begin{equation}\label{zibzi2}
 \frac{1}{\pi} \frac{z}{\bar{z}^2} \star \chi_E
\end{equation}
to \eqref{caucon}, by remarking that
\begin{equation*}\label{zezb2}
  \frac{1}{\pi} \frac{z}{\bar{z}^2} \star \chi_E =  (-\overline{\partial}) \left( \frac{1}{\pi} \frac{z}{\bar{z}} \star \chi_E \right),
\end{equation*}
and
\begin{equation*}\label{de+}
 \partial \left( \frac{1}{\pi} \frac{z}{\bar{z}} \star \chi_E\right)=  \frac{1}{\pi} \frac{1}{\bar{z}} \star \chi_E,
\end{equation*}
where we denote by $\partial = \frac{1}{2}\, \left(\frac{\partial}{\partial x}- i \frac{\partial}{\partial y}\right)$ the derivative with respect to the variable $z.$
Hence to compute~\eqref{zibzi2} one has to find a bounded primitive in $z$ of \eqref{caucon} and then
take $-\overline{\partial}.$  A primitive in $z$ of~\eqref{caucon} in $\mathring{E} \cup (\C\setminus E)$ is
\begin{equation}\label{primit}
 \frac{1}{2} (z-\lambda \overline{z})^2 \chi_E(z)+ \left(2abh(\overline{z}) H(z) +\varphi(\overline{z})\right) \chi_{E^c}(z),
\end{equation}
where $H(z)= z-\lambda \overline{z}-2abh(\ba)$ and $\varphi(z)$ is holomorphic on $E^c.$ Since on $\partial E$ 
$$
 \frac{1}{2} (z-\lambda \overline{z})^2 =  \frac{1}{2} (2abh(\ba))^2 
$$
we choose $\varphi(z) = \frac{1}{2} (2abh(z))^2 $ so that the function in \eqref{primit} is continuous and bounded on~$\C.$ 
The function in \eqref{primit} and $\frac{1}{\pi}  \frac{z}{\bar{z}} \star \chi_E$  are bounded primitives in $z$ of  
$\frac{1}{\pi} \frac{1}{\bar{z}} \star \chi_E,$ and so the difference is a bounded function on $\C$ annihilated by the operator
$\partial.$ This means that the conjugate function is a bounded entire function.
By Liouville's Theorem there is a constant $C$ such that
\begin{equation}\label{potzizib} 
\frac{1}{\pi}  \frac{z}{\bar{z}} \star \chi_E=
\frac{1}{2} (z-\lambda \overline{z})^2 \chi_E(z)+ \left(2abh(\overline{z}) H(z) + \frac{1}{2} (2abh(\ba))^2 \right) \chi_{E^c}(z)+C.
\end{equation}
It can be readily checked, examining the expansion at $\infty,$  that $C=\lambda ab,$  but this precise value is not important here. 
Taking $-\overline{\partial}$ in \eqref{potzizib} we get
\begin{equation}\label{zizib2}
\left(\frac{1}{\pi} \frac{z}{\overline{z}^2} \star \chi_E\right)(z) =\lambda (z-\lambda \overline{z}) , \quad z \in E.
\end{equation}
Indeed, one can obtain an explicit (although complicated) expression for the potential above also off $E,$ but precisely we want to show that it is not necessary to use it.

Plugging \eqref{cau}, \eqref{caucon} and \eqref{zizib2} in the formula \eqref{grapot} for the gradient of $P$ we get
\begin{equation*}\label{}
\nabla P (z) = \left( \frac{1}{ab} (-1-\alpha \lambda) +1\right) z + 
\frac{1}{ab} \left(\lambda +\frac{\alpha}{2}+ \frac{\alpha}{2} \lambda^2 \right) \ba, \quad z \in E.
\end{equation*}
Then $\nabla P$ vanishes on $E$ if and only if $a$ and $b$ are solutions of the system
\begin{equation*}\label{}
\begin{cases}
ab = 1+\alpha \lambda\\*[10pt]
\alpha \lambda^2+2 \lambda + \alpha =0.
\end{cases}
\end{equation*}
Solving the system yields $a=\sqrt{1-\alpha}$ and $b=\sqrt{1+\alpha} ,$ which provides an ellipse such that the potential 
of the normalised characteristic function of the enclosed domain satisfies the first Euler--Lagrange condition. 
This is the candidate ellipse. Our task in the next section is to show that the corresponding potential $P$ satisfies the second Euler--Lagrange condition.

\section{The second Euler--Lagrange condition}

Let $P$ stand for the potential of the normalised characteristic function of the domain $E$ enclosed by the candidate ellipse
found in the previous section.  We know that $P(z)=C_0$ for  $z \in  E$, and we have to prove that $P(z)\ge C_0$ for $z \notin E$.  
The proof proceeds in two steps. Our first task is to show the following result.
\begin{lemma*}
We have
\begin{equation}\label{lappotfr}
\lim_{E \not\ni w \to z}\Delta P(w) \ge \frac{2}{ab} (1-|\alpha|) >0, \quad z \in \partial E. 
\end{equation}
\end{lemma*}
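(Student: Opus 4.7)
The plan is to compute $\Delta P$ explicitly on $E^c$ using the formulas from Section~5, and then pass to the boundary limit as $w\to z\in\partial E$ from outside. Since $\mu$ is supported in $E$, the logarithmic piece $-\log|\cdot|\star\mu$ is harmonic on $E^c$, while the quadratic confinement gives $\Delta(|w|^2/2)=2$. The anisotropic piece reduces, via the identity
\[
\frac{x^2}{|z|^2} = \frac{1}{2} + \frac{1}{2}\operatorname{Re}\!\Big(\frac{z}{\bar z}\Big),
\]
to computing $\Delta\bigl[(z/\bar z)\star\mu\bigr]$ on $E^c$.

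From \eqref{potzizib}, after dividing by $|E|=\pi ab$, on $E^c$ the convolution $(z/\bar z)\star\mu$ equals, up to constants,
\[
2zh(\bar z) - 2\lambda\bar z\, h(\bar z) - 2ab\,h(\bar z)^2.
\]
The crucial observation is that $h(\bar z)$ is antiholomorphic in $z$: the branch cut of $h$ is the interfocal segment, which lies in the interior of $E$, so $h$ is holomorphic in a neighbourhood of $\overline{E^c}$ and $\partial_z h(\bar z)=0$. Applying $\Delta = 4\partial_z\partial_{\bar z}$ therefore annihilates the second and third terms (both purely antiholomorphic in $z$), leaving only $\Delta[2zh(\bar z)] = 8\,h'(\bar z)$. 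Substituting back,
\[
\Delta P(w) = 2 + 4\alpha\operatorname{Re}h'(\bar w), \qquad w\in E^c,
\]
and by holomorphicity of $h'$ across $\partial E$ the limit as $E\not\ni w\to z\in\partial E$ is simply $2 + 4\alpha\operatorname{Re}h'(\bar z)$.

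It remains to evaluate this on $\partial E$ and bound it from below. Parametrising the boundary by $z(\theta) = a\cos\theta + ib\sin\theta$, a short direct calculation yields $\sqrt{\bar z^2+c^2} = b\cos\theta - ia\sin\theta$ (the branch fixed by checking at $\theta=0$) and hence $h(\bar z) = e^{i\theta}/(a+b)$; combined with the identity $h'(w)=-h(w)/\sqrt{w^2+c^2}$ this gives an explicit expression for $h'(\bar z)$. Using $4\alpha/(a+b) = 2(b-a)$ (which follows from $\alpha = (b^2-a^2)/2$) and putting everything over the common denominator $b^2\cos^2\theta + a^2\sin^2\theta$, the numerator collapses to the constant $2ab$; the defining relations $a^2+b^2=2$ and $b^2-a^2=2\alpha$ of the candidate ellipse then convert the denominator into $1+\alpha\cos(2\theta)$. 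One thus finds
\[
\lim_{E\not\ni w\to z}\Delta P(w) = \frac{2ab}{1+\alpha\cos(2\theta)} \ge \frac{2ab}{1+|\alpha|} = \frac{2(1-|\alpha|)}{ab},
\]
where the last equality uses $(ab)^2 = 1-\alpha^2$; this is the claimed bound.

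The main obstacle is the numerator collapse to the constant $2ab$: that the constant $2$ coming from the confinement and the oscillatory contribution $4\alpha\operatorname{Re}h'(\bar z)$ conspire to give a strictly positive lower bound $\frac{2(1-|\alpha|)}{ab}$ depends crucially on the specific choice $a=\sqrt{1-\alpha}$, $b=\sqrt{1+\alpha}$, i.e.\ on the fact that these are precisely the semi-axes for which $P$ is constant on $E$. For any other ellipse one would not expect such a clean cancellation to occur, which is what makes the inequality in \eqref{lappotfr} sharp.
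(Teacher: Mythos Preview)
Your argument is correct, but it follows a genuinely different route from the paper's.

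The paper never computes $\Delta P$ pointwise on $E^c$. Instead it writes the singular part of $\Delta P$ as a Cauchy-type boundary integral, $\operatorname{p.v.}(-2/z^2)\star\chi_E=\tfrac{1}{iz}\star\overline{\tau^2}\,dz_{\partial E}$, and uses the Plemelj jump formula together with the fact (from EL1) that $\Delta P\equiv 0$ on $\mathring E$ to obtain the exterior boundary limit directly as $\tfrac{2}{ab}\bigl(1-\alpha\operatorname{Re}\tau(z)^2\bigr)$, from which the bound $\tfrac{2}{ab}(1-|\alpha|)$ is immediate since $|\tau|=1$.

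You instead take the explicit expression \eqref{potzizib} for $(z/\bar z)\star\chi_E$ on $E^c$, apply $\Delta=4\partial_z\partial_{\bar z}$ to obtain the closed formula $\Delta P(w)=2+4\alpha\operatorname{Re}h'(\bar w)$ on all of $E^c$, and then parametrise $\partial E$ and reduce to $\tfrac{2ab}{1+\alpha\cos 2\theta}$. This is more elementary---no Plemelj needed---and yields $\Delta P$ on the whole exterior, not just its boundary trace; on the other hand the paper's route avoids the boundary parametrisation and the algebraic simplifications you flag as the ``main obstacle'', replacing them by a one-line jump identity. It is worth noting that the paper remarks explicitly, just after \eqref{zizib2}, that an explicit expression for the relevant convolution off $E$ can be obtained ``but precisely we want to show that it is not necessary to use it''; your approach uses exactly this expression, to good effect. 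One small comment: the numerator collapse to $2ab$ uses only the relation $b^2-a^2=2\alpha$ (via $4\alpha/(a+b)=2(b-a)$), not both defining relations of the candidate ellipse; the second relation $a^2+b^2=2$ enters only when rewriting the denominator as $1+\alpha\cos 2\theta$.
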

\begin{proof}
 Taking $2 \partial$ in \eqref{grapot} one obtains
\begin{equation}\label{lappot}
\Delta P = -2\pi \frac{\chi_E}{|E|}-\alpha \;\pv \left(2 \re\left(\frac{1}{z^2}\right)\right)\star \frac{\chi_E}{|E|}+2.
\end{equation}
The jump of a function $f$ defined on $\mathring E \cup E^c$ at a point $z \in \partial E$ is
\begin{equation*}\label{}
\operatorname{jump}(f)(z) := \lim_{\mathring E \ni w \to z} f(w) -\lim_{E \not\ni w \to z} f(w).
\end{equation*}
Since $\Delta P=0$ on $\mathring E$ because of the first Euler--Lagrange equation, we have
\begin{equation*}\label{}
\lim_{E \not\ni w \to z} \Delta P(w)= - \operatorname{jump}(\Delta P)(z), \quad z \in \partial E.
\end{equation*}
To compute the jump of the Laplacian of $P$ we first need to express the principal value integral in \eqref{lappot} in a more suitable form. We have
\begin{equation*}\label{}
\operatorname{p.v.}\left(\frac{-2}{z^2}\right) \star \chi_E = \frac{1}{z}\star 2 \partial \chi_E = \frac{1}{z} \star (-\overline{n})\,d\sigma
=\frac{1}{iz} \star \overline{\tau^2} \,dz_{\partial E},
\end{equation*}
where $d\sigma$ is the length measure on $\partial E,$ $n$ the exterior unit normal vector and $\tau$ 
the unit tangent vector.

By \eqref{lappot} and the Plemelj's jump formula \eqref{ple} below we get
\begin{equation*}\label{}
\begin{split}
\lim_{E \not\ni w \to z} \Delta P(w) & = \frac{2\pi}{|E|}+ \frac{\alpha}{|E|} \re
\left(\operatorname{jump}\left(\frac{1}{iz} \star \overline{\tau^2} \,dz_{\partial E} \right)(z)\right)  \\*[10pt]
&= \frac{2}{ab} \left( 1- \alpha \, \re \left(\tau(z)^2 \right) \right)\\*[10pt]
& \ge \frac{2}{ab} (1-|\alpha|), \quad z \in \partial E.
\end{split}
\end{equation*}
\end{proof}
We turn now to the second step of the proof of the second Euler--Lagrange condition.
Recall that $P$ satisfies the first Euler--Lagrange condition, so that  $P(z) = C_0, \; z \in E.$  
We want to show that $P(z) \ge C_0, \; z \notin E,$ and for this we would like to apply the minimum principle.
Note that the term $|z|^2/2$ makes $P$ larger than $C_0$ at $\infty.$  But $P$ is not harmonic off $E,$ nor superharmonic, 
and thus the minimum principle cannot be applied directly to $P.$ Identity \eqref{lappot} shows that $P$ is biharmonic off E, 
that is, the Laplacian is harmonic, and for these functions there are some well-known manipulations that allow an application of
the minimum principle \cite{D}. We found an inspiration in that paper to devise the argument below.

Take $a \notin E$ and let $a_0 \in \partial E$ stand for the projection of $a$ into $E.$  Let $\vec{n}$ be the exterior unit normal 
vector to $\partial E$ at the point $a_0,$ so that $a$ belongs to the ray emanating from $a_0$ in the direction $\vec{n}.$
If we set
\begin{equation*}\label{}
h(z)=  \langle \nabla P(z), \vec{n}\rangle - \frac{1}{2} \Delta P(z) \langle z-a, \vec{n} \rangle, \quad z \notin E,
\end{equation*}
then we get
\begin{equation*}\label{}
\Delta h(z)=  \langle \nabla \Delta P(z), \vec{n} \rangle - \langle \nabla \Delta P(z), \nabla(\langle z-a,\vec{n} \rangle )=0, \quad z \notin E.
\end{equation*}
Since $P$ is of class $C^1$ on $\C$ the first Euler--Lagrange condition implies that $\nabla P(z)=0$ on $E.$ Hence
\begin{equation*}\label{}
\lim_{ E \not\ni w \to z} h(w)=  - \frac{1}{2} \lim_{ E \not\ni w \to z} \Delta P(w) \langle z-a,\vec{n} \rangle \ge 0, \quad z \in \partial E.
\end{equation*}
This follows on the one hand by the Lemma, which yields $\lim_{ E \not\ni w \to z} \Delta P(w) \ge 0, \; z \in \partial E,$ 
and on the other hand by the inequality
$\langle z-a,\vec{n} \rangle 
\le 0, \;  z \in \partial E,$ which is due to the convexity of $E.$ 

For $z $ large the behaviour of $h$ is controlled by the function one gets by replacing $P$ with the dominant term $|z|^2/2$ in the definition of $h.$ This function is 
\begin{equation*}\label{}
\langle z,\vec{n} \rangle - \langle z-a,\vec{n} \rangle = \langle a,\vec{n} \rangle >0,
\end{equation*}
which yields $h(z) > 0,$ for  $z$ large enough. Therefore, by the minimum principle, $h(z) \ge 0, \; z \notin E,$  and so $0\le h(a)=\langle \nabla P(a), \vec{n}\rangle .$
One concludes that $P$ is increasing along the ray starting at $a_0$ in the direction of $\vec{n}$ and so $P(a) \ge C_0.$

\section{Appendix: The Plemelj jump formula}

Let $\Gamma$ be a smooth Jordan curve enclosing a domain $D.$ Let $f$ be a smooth function on the curve. Define the Cauchy integral of $f$ as
\begin{equation*}
C(f)(z)=\frac{1}{2\pi i} \int_{\Gamma} \frac{f(\zeta)}{\zeta-z}\, d\zeta, \quad z \notin \Gamma.
\end{equation*}
The Plemelj formula (also called Plemelj--Sohotski) is the following:
\begin{equation}\label{ple}
\lim_{D \ni  z \to a}C(f)(z)-\lim_{\overline{D} \not\ni  z \to a}C(f)(z) = f(a), \quad a \in \Gamma.
\end{equation}
Note that the limits in the left-hand side of \eqref{ple} exist, because $\Gamma$ and $f$ are smooth. 

There is a much more general version of the formula in which $\Gamma$ is a rectifiable Jordan curve, $f$ is integrable with respect to the arc length measure,  the limits in the left-hand side are non-tangential and the identity holds a.e.\ with respect to the arc length measure on $\Gamma$. 

We now prove \eqref{ple} under the assumption that $\Gamma$ is rectifiable and $f$ is Lipschitz on $\Gamma.$ Appealing to a
well-known extension theorem we can further assume that $f$ is defined and Lipschitz on the whole plane.  Since the winding number of $\Gamma$  with respect to a point in $D$ is $1$ and with respect to a point off $\overline{D}$ is $0$, we have
\[
C(f)(z)=\frac{1}{2\pi i}\int_{\Gamma}\frac{f(\zeta)-f(z)}{\zeta-z}\,d\zeta+f(z),\quad
z\in D,
\]
and
\[
C(f)(z)=\frac{1}{2\pi i}\int_{\Gamma}\frac{f(\zeta)-f(z)}{\zeta-z}\,d\zeta,\quad z\notin \overline{D}.
\]
Taking limits as $z$ tends to $a$ from $D$ and from $\C\setminus \overline{D}$, we obtain
\begin{equation}\label{A}
\lim_{D \ni  z \to a} C(f)(z)=\frac{1}{2\pi i}\int_{\Gamma}\frac{f(\zeta)-f(a)}{\zeta-a}\,d\zeta+f(a),\quad a\in \Gamma,
\end{equation}
and
\begin{equation}\label{AA}
\lim_{\overline{D} \not \ni  z \to a} C(f)(z)=\frac{1}{2\pi i}\int_{\Gamma}\frac{f(\zeta)-f(a)}{\zeta-a}\,d\zeta, \quad a \in \Gamma,
\end{equation}
by the Dominated Convergence Theorem. Subtracting \eqref{AA} from \eqref{A} we get \eqref{ple}.

The interested reader may consult \cite{V} for a relation with the boundary Cauchy singular integral, 
defined in terms of principal values,  in a basic context as the one considered here. For more general results in higher
dimensions one can see \cite{HMT} and  \cite{T}.

\begin{acknowledgements} 
JM and JV are supported by
2017-SGR-395 (Generalitat de Cata\-lunya),
and MTM2016-75390 (Mineco). 
MGM acknowledges support by the Universit\`a di Pavia
through the 2017 Blue Sky Research Project ``Plasticity at different scales: micro to macro"
and by GNAMPA--INdAM.
LR is partly supported by GNAMPA--INdAM through Projects 2018 and 2019.
LS acknowledges support by the EPSRC Grant EP/N035631/1. 

The authors would like to thank the referee for a thorough reading of the paper and for valuable suggestions 
which have improved the exposition.
\end{acknowledgements}

\bigskip

\vspace{0.5cm}
{\small
\begin{tabular}{@{}l}
J.\ Mateu and J.\ Verdera \\ Departament de Matem\`{a}tiques, Universitat Aut\`{o}noma de Barcelona,\\
Barcelona Graduate School of Mathematics, Barcelona, Catalonia,\\
Centre de Recerca Matem\`atica\\
{\it E-mail:} {\tt mateu@mat.uab.cat} and {\tt jvm@mat.uab.cat}\\*[5pt]
M.\ G.\ Mora\\ 
 Dipartimento di Matematica, Universit\`a di Pavia, Italy\\
{\it E-mail:} {\tt mariagiovanna.mora@unipv.it}\\*[5pt]
L.\ Rondi\\ 
Dipartimento di Matematica, Universit\`a di Milano, Italy\\
{\it E-mail:} {\tt luca.rondi@unimi.it}\\*[5pt]
L.\ Scardia\\
Department of Mathematics, Heriot-Watt University, Edinburgh, United Kingdom\\
{\it E-mail:} {\tt l.scardia@hw.ac.uk}
\end{tabular}}

\begin{thebibliography}{CMM+2}

\bibitem[CMM+]{CMMRSV} J.\ A.\ Carrillo, J.\ Mateu, M.\ G.\ Mora, L.\ Rondi, L.\ Scardia and J.\ Verdera,
{\em The ellipse law: Kirchhoff meets dislocations}, Comm. Math. Physics.  \textbf{373} (2020), 507--524. https://doi.org/10.1007/s00220-019-03368-w

\bibitem[CMM+2]{CMMRSV2} J.\ A.\ Carrillo, J.\ Mateu, M.\ G.\ Mora, L.\ Rondi, L.\ Scardia and J.\ Verdera,
 {\em The equilibrium measure for an anisotropic nonlocal energy},  Preprint arXiv:1907.00417, 2019.


\bibitem[Duf]{D} R.~J.~Duffin, {\em The maximum principle and biharmonic functions}, J. Math. Anal. Appl.  \textbf{3}(3) (1961), 
399--405.

\bibitem[Fro]{F} O.\ Frostman,  {\em Potentiel d'\'equilibre et capacit\'e des ensembles avec quelques applications \`a la th\'eorie des
fonctions}, Meddel. Lunds Univ.\ Mat.\ Sem. \textbf{3} (1935), 1--118.

\bibitem[HMV]{HMV} T.\ Hmidi, J.\ Mateu and J.\ Verdera,
{\em On rotating doubly connected vortices},  J. Differential Equations
\textbf{258} (2015), 1395--1429.

\bibitem[HMT]{HMT} S.\ Hofmann, M.\ Mitrea and M.\ Taylor,
{\em Singular integrals and elliptic boundary problems on regular
Semmes-Kenig-Toro domains}, International Mathematics Research Notices \textbf{14} (2010), 2567--2865.

\bibitem[L]{L} N.~S.\ Landkof, Foundations of Modern Potential Theory, Springer-Verlag, Berlin Heidelberg New York, 1972.


\bibitem[MMR+]{MMRSV}
J.~Mateu, M.~G.\ Mora, L.~Rondi, L.~Scardia and J.~Verdera,
{\em A maximum principle approach to the minimisation of a nonlocal dislocation energy},
Mathematics in Engineering \textbf{2} (2) (2020), 253--263.
 
\bibitem[MRS]{MRS}
M.~G. Mora, L.~Rondi, and L.~Scardia, 
{\em The equilibrium measure for a nonlocal dislocation energy},
Comm. Pure Appl. Math. \textbf{72} (2019), 136--158.

\bibitem[ST]{ST}
E.~B.~Saff and V.~Totik, 
{\em Logarithmic potentials with external fields}, Studies in Math. 316,
Springer-Verlag, Berlin Heidelberg, 1997.

\bibitem[Ste]{S} E.\ M.\ Stein, {\em Singular integrals and differentiability properties of
functions,} Princeton University Press, Princeton, 1970.




\bibitem[T]{T} X.\ Tolsa, {\em Jump formulas for singular integrals and layer potentials on rectifiable sets},
Preprint arXiv:1811.07837, (2018).




\bibitem[Ver]{V} J. Verdera, \emph{$L^2$-boundedness of the Cauchy integral and Menger curvature.} Harmonic analysis and boundary value problems (Fayetteville, AR, 2000), 139--158, Contemp. Math., 277, Amer. Math. Soc., Providence, RI, 2001.

\end{thebibliography}
\end{document}